 \renewcommand{\headrulewidth}{0pt}
 \renewcommand{\footrulewidth}{0.5pt}
 \definecolor{myaqua}{rgb}{0.0,0.5,0.55}
 \definecolor{lightaqua}{rgb}{0.75,0.95,0.95}
\newtheorem{theorem}{Theorem}
\newtheorem{proposition}[theorem]{Proposition}
\newtheorem{lemma}{Lemma}
\def\lin#1#2{\textcolor[rgb]{0.6,0.6,0.6}{\vspace*{#1mm} \hrule
   height 3 pt \vspace*{#2mm}}}
\def\bt{\begin{tabular}}
\def\et{\end{tabular}}
\def\and{\mbox{ and }}
\def\E{\mbox{\bf E}}
\def\1{{\bf 1}}
 \def\sectionn#1{\refstepcounter{section}{\color{myaqua}

 \vskip 6mm

 \noindent\Large\bf\thesection. #1}

 \vskip 3mm}
 \def\subsectionn#1{\refstepcounter{subsection}{\color{myaqua}

 \vskip 5mm

 \noindent\large\bf\thesubsection. #1}

 \vskip 2mm}
\begin{document}

 $\mbox{ }$

 \vskip 12mm

{ % \fontfamily{Cambria}\selectfont

% "Title of the Paper"

{\noindent{\huge\bf\color{myaqua}
 Variable selection in multiple regression  with random design
}}
%
%\mathbb{R}untitle{Variable selection in multiple regression}
\\[6mm]
{\large\bf Alban Mbina Mbina$^1$, Guy Martial Nkiet$^2$, Assi Nguessan$^3$}
\\[2mm]
{ %\fontfamily{Calibri}\selectfont
 $^1$URMI, Universit\'e des Sciences et Techniques de Masuku,  Franceville, Gabon\\
Email: \href{mailto:albanmbinambina@yahoo.fr}{\color{blue}{\underline{\smash{albanmbinambina@yahoo.fr}}}}\\[1mm]
$^2$URMI, Universit\'e des Sciences et Techniques de Masuku,  Franceville, Gabon\\
Email:
\href{mailto:gnkiet@hotmail.com}{\color{blue}{\underline{\smash{gnkiet@hotmail.com}}}}
 \\[1mm]
$^3$Universit\'e des Sciences et Technologies de Lille,  Lille, France\\
Email:
\href{mailto:assi.nguessan@polytech-lille.fr}{\color{blue}{\underline{\smash{assi.nguessan@polytech-lille.fr}}}}
 \\[4mm]

\lin{5}{7}

 { % \fontfamily{Cambria}\selectfont
 {\noindent{\large\bf\color{myaqua} Abstract}{\bf \\[3mm]
 \textup{ We propose a method for  variable selection in  multiple regression with random predictors. This method is based on a criterion that  permits to reduce the variable selection problem to a problem of estimating  suitable permutation and dimensionality.  Then, estimators for these parameters are proposed and the resulting method for selecting variables is shown to be consistent. A simulation study that permits to gain understanding of the performances of the proposed approach  and to compare it with an existing method is given. 
  }}}
\\[4mm]
 {\noindent{\large\bf\color{myaqua} Keywords}{\bf \\[3mm]
 Variable selection; Multiple linear regression; Random design; Selection criterion;  Consistency
}

\lin{3}{1}

\sectionn{Introduction}
The selection of variables and models is an old and important problem in statistics, and several approaches have been proposed  to deal with it for various methods of multivariate statistical analysis. For linear regression, many model selection criteria have been proposed in the literature. Surveys on earlier work in this field may be found in \cite{Hock76,Thom78a,Thom78b}, whereas some monographs on this topic are avalaible (e.g.,\cite{ Lin86, Mill90}).  Most of the methods that have been proposed for variable selection in linear regression  deal with the case where the covariates are assumed to be nonrandom; for this case, many selection criteria have been introduced in the literature. These include the FPE criterion (\cite{Thom78a, Thom78b, Shib84, Zhang92}), cross-validation (\cite{Zhang93,Shao93}), AIC and $C_p$ type criteria (e.g., \cite{Fuji97}), the prediction error criterion (\cite{Fuji11}), and so on.  There is just a few works dealing with the case where the covariates are random, although its importance that have been recognized in \cite{Brei92} who argued  that that this case typically gives higher prediction errors than the fixed design counterparts and hence more is gained by variable selection. Linear regression with random design were considered in \cite{Zheng97, Nk01} for variable selection, but these works only deal with univariate models, that is models for which the response is a real random variable.  A recent work that considered multiple regression model is \cite{An13} in which a method  based on applying an  adaptative LASSO type penalty and a novel BIC-type selection criterion have been proposed in order to select both predictors and responses. 

In this paper we extend the approach introduced in \cite{Nk01} to the case of multiple regression. In Section 2, the multiple regression model that is used 
is presented   as well as a statement of the variable selection problem. Then, the used criterion is introduced and we give a characterization result that
permits to reduce the variable selection problem to an estimation problem for two parameters. In Section 3, we propose our method for selecting variables by estimating the two previous  parameters, and we prove its consistency. Section 4 is devoted to a simulation study which permits to evaluate finite sample performances of the proposal and to compare it with the method given in \cite{An13}.  Proofs of lemmas and theorems are given in Section 5.

{ \fontfamily{times}\selectfont
 \noindent

\renewcommand{\headrulewidth}{0.5pt}
\renewcommand{\footrulewidth}{0pt}

 \renewcommand{\headrule}{\hbox to\headwidth{\color{myaqua}\leaders\hrule height \headrulewidth\hfill}}

\sectionn{Model and criterion for selection}
\label{sec2}

{ \fontfamily{times}\selectfont
 \noindent
In this section, the multiple regresion model in which we are interested is introduced and a statement of the corresponding variable selection problem is given. It is described as a problem of estimation of a suitable set. A criterion permitting to characterize this set is propsed as well as an estimator of this criterion. Finally, we give a result that permits to obtain asymtotic properties of this estimator.

\subsectionn{Model and statement of the problem}
\label{subsec:MSP}

{ \fontfamily{times}\selectfont
 \noindent
We consider the multiple regression model given by:
\begin{equation}\label{eq:1}
Y = BX + \varepsilon
\end{equation}
where $X$ and $Y$ are random vectors valued into  $\mathbb{R}^{p}$ and $\mathbb{R}^{q}$ respectively with  $p \geq 2$ and  $q \geq 2$, $B$ is a  $q \times p$ matrix of real  coefficients, and $\varepsilon$ is a random vector valued into $\mathbb{R}^{q}$ and which is independent of $X$. Writing 
\[
X = \left(
 \begin{array}{c}
 X_1\\
 \vdots \\
 X_p
 \end{array}
\right),\,\,\,
Y = \left(
 \begin{array}{c}
 Y_1\\
 \vdots \\
 Y_q
 \end{array}
\right),\,\,\,
\varepsilon = \left(
 \begin{array}{c}
 \varepsilon_1\\
 \vdots \\
 \varepsilon_q
 \end{array}
\right)
\]
and
\[
B = \left(
 \begin{array}{cccc}
  b_{11} &b_{12} &\cdots &b_{1p}\\
  b_{21} &b_{22} &\cdots &b_{2p}\\
  \vdots &\vdots &\cdots &\vdots\\
  b_{q1} &b_{q2} &\cdots &b_{qp}
\end{array}
\right)
\]
it is easily seen that Model  $(\ref{eq:1})$ is equivalent to having a set of $p$ univariate regression models given by:

\begin{equation}\label{eq:2}
Y_i = \sum_{j=1}^{p}b_{ij}X_j + \varepsilon_i,\,\,\,\,\, i=1,\cdots,q,
\end{equation}
and can also be writen as
\begin{equation}\label{eq:3}
Y=\sum_{j=1}^pX_j\textrm{\textbf{b}}_{\bullet j}+\varepsilon
\end{equation}
where
\[
\textrm{\textbf{b}}_{\bullet j} = \left(
 \begin{array}{c}
  b_{1j}\\
  b_{2j}\\
  \vdots \\
  b_{qj}
 \end{array}
\right).
\]
We are interested with the variable selection problem, that is identifying the $X_j$'s which are not relevant in the previous set of models, on the basis of an i.i.d. sample $\left(X^{(k)},Y^{(k)}\right)_{1\leq k\leq n}$ of $(X,Y)$. We say that a variable $X_j$ is not relevant if the corresponding coefficients vector  $\textrm{\textbf{b}}_{\bullet j}$ is null. So, putting  $I = \left\{1, \cdots , p\right\}$ we consider the subset  $I_{0} = \left\{j \in I\,/\, \|\textrm{\textbf{b}}_{\bullet j}\|_{\mathbb{R}^q}= 0  \right\}$ which is assumed to be non-empty, and
we tackle the variable selection problem in Model  $(\ref{eq:1})$ as a problem of estimating  the set $I_0$ or, equivalently, the set $I_1=I-I_0$. In order to simplify the estimation of $I_1$ we will first characterize it  by means of a criterion which introduced below.
}
\subsectionn{Characterization of $I_1$}
\label{subsec:carac}

{ \fontfamily{times}\selectfont
 \noindent
Without loss of generality, we assume that $X$ and $Y$ are centered; thus, that  is also the case for $\varepsilon$. Furthermore, denoting by $\Vert\cdot\Vert_{\mathbb{R}^k}$  the usual Euclidean norm of $\mathbb{R}^k$, we assume that $\mathbb{E}\left(\Vert X\Vert^4_{\mathbb{R}^p}\right)<+\infty$ and  $\mathbb{E}\left(\Vert Y\Vert^4_{\mathbb{R}^q}\right)<+\infty$.  Then, it is possible to define
the  covariance operators
\begin{equation}\label{covop}
V_{1}=\mathbb{E}\left(  X\otimes X\right)  \,\,\,\textrm{ and }\,\,\,V_{12}=\mathbb{E}\left(
Y\otimes X\right),
\end{equation}
where $\otimes$ denotes the tensor product of vectors defined as follows:  when $E$ and $F$ are euclidean spaces and
$\left(  u,v\right)  $ is a pair belonging to $E\times F$, the tensor product
$u\otimes v$ is the linear map from $E$ to $F$ such that
\[
\forall\,h\in E,\,\,\,\left(  \,u\otimes v\right)  \left(  h\right)
=\left\langle u,h\right\rangle _{E}\,\,v,\,\,
\]
where $\left\langle \cdot,\cdot\right\rangle _{E}$ denotes the inner product
in $E$.

\bigskip

\noindent {\bf Remark 1. }  
In all of the paper, we essentially use covariance operators, but the
translation into matrix terms  is obvious  and
more details can be found in  \cite{Daux94}. Particularly, when $u$ and $v$ are vectors in $\mathbb{R}^p$ and $\mathbb{R}^q$ respectively, the matrix related to  the operator
$u\otimes v$, relative to canonical bases, is $vu^T$  where $u^{T}$ is the transpose of
$u$. So, if matricial expressions are prefered to operators, one can identify the operators given in (\ref{covop}) with the  matrices
$V_{1}=\mathbb{E}\left(  XX^{T}\right)$ and $V_{12}=\mathbb{E}
\left(  XY^{T}\right)$.

\bigskip

\noindent In all of the paper,  the operator   $V_1$ is assumed to be  invertible. For any subset $K$ of $I$, let $A_{K}$ be
the  projector 
\[
x=\left(  x_{i}\right)  _{i\in I}\in\mathbb{R}^{p}\mapsto
x_{K}=\left(  x_{i}\right)  _{i\in K}\in\mathbb{R}^{card\left(  K\right)  }
\]
and put $\Pi_{K}:=A_{K}^\ast\left(  A_{K}V_1A_{K}^\ast\right)  ^{-1}A_{K}$, where $A^\ast$ denotes the adjoint operator of $A$. Then, we introduce the criterion
\begin{equation}\label{eq:3}
\xi_{K} = \| V_{12} - V_1 \Pi_{K} V_{12} \|
\end{equation}
where $\Vert\cdot\Vert$ denotes the usual operator  norm given by $\Vert A\Vert=\sqrt{\textrm{tr}\left(A^\ast A\right)}$. This criterion permits to give a more explicit expression of $I_1$ as stated in the following lemma.

\begin{lemma}
We have $I_1\subset K$ if, and only if, $\xi_{K} = 0$.
\end{lemma}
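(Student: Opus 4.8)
The plan is to turn the scalar equality $\xi_K=0$ into a statement about ranges of operators, using that $\Vert\cdot\Vert$ is a genuine norm (so $\xi_K=0$ is equivalent to the operator identity $V_{12}=V_1\Pi_K V_{12}$), and then to translate that range statement back into the coordinate condition $\mathbf{b}_{\bullet j}=0$ for $j\notin K$ that defines $I_1$.

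First I would compute $V_{12}$ explicitly from the model. Since $Y=BX+\varepsilon$ with $X$ centered and $\varepsilon$ centered and independent of $X$, the cross term $\mathbb{E}\left(\varepsilon\otimes X\right)$ vanishes and one obtains $V_{12}=\mathbb{E}\left((BX)\otimes X\right)=V_1 B^\ast$ (in matrix terms $V_{12}=V_1 B^T$). This is the crucial link between the criterion and the coefficient matrix $B$ whose columns $\mathbf{b}_{\bullet j}$ govern membership in $I_1$.

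Next I would show that $P_K:=V_1\Pi_K$ is an (oblique) projector and identify its range. A direct computation gives $\Pi_K V_1\Pi_K=\Pi_K$ (because $A_K V_1 A_K^\ast\,(A_K V_1 A_K^\ast)^{-1}=\mathrm{id}$), whence $P_K^2=V_1\Pi_K V_1\Pi_K=V_1\Pi_K=P_K$. Moreover, since the middle factor is invertible and $A_K$ is surjective, one checks $\mathrm{Range}(P_K)=\mathrm{Range}(V_1 A_K^\ast)=V_1\,\mathrm{Range}(A_K^\ast)$, the image under $V_1$ of the coordinate subspace $\mathrm{Range}(A_K^\ast)=\left\{x\in\mathbb{R}^p:\ x_i=0\ \forall i\notin K\right\}$. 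Because $P_K$ is a projector, its fixed-point set equals its range, so $V_{12}=P_K V_{12}$ holds if and only if every column of $V_{12}$ lies in $\mathrm{Range}(P_K)$, i.e. if and only if $\mathrm{Range}(V_{12})\subseteq V_1\,\mathrm{Range}(A_K^\ast)$.

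Finally I would combine the two facts. Substituting $V_{12}=V_1 B^\ast$, the inclusion becomes $V_1\,\mathrm{Range}(B^\ast)\subseteq V_1\,\mathrm{Range}(A_K^\ast)$, which, $V_1$ being invertible, is equivalent to $\mathrm{Range}(B^\ast)\subseteq\mathrm{Range}(A_K^\ast)$. Reading this coordinate-wise, it says that $(B^\ast y)_j=\langle\mathbf{b}_{\bullet j},y\rangle_{\mathbb{R}^q}=0$ for every $y\in\mathbb{R}^q$ and every $j\notin K$, i.e. $\mathbf{b}_{\bullet j}=0$ for all $j\notin K$, which is exactly $I_1\subseteq K$; chaining the equivalences proves both directions at once. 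I expect the main obstacle to be the middle step: verifying that $V_1\Pi_K$ is idempotent and pinning down its range as $V_1\,\mathrm{Range}(A_K^\ast)$, since this is where the structure of $\Pi_K$ and the invertibility of the block $A_K V_1 A_K^\ast$ (inherited from the invertibility of $V_1$) are genuinely used. The rest is routine, provided one is careful that the equivalence ``$V_{12}=P_K V_{12}$ iff range inclusion'' rests on $P_K$ being a projector, and that cancelling $V_1$ at the last step is legitimate precisely because $V_1$ is assumed invertible.
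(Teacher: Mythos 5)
Your proof is correct, but it takes a genuinely different route from the paper's. The paper lifts everything into $L^{2}(\Omega,\mathcal{A},P)$: it introduces the operators $L_{1}:u\mapsto\sum_{j}u_{j}X_{j}$ and $L_{2}:\alpha\mapsto\sum_{i}\alpha_{i}Y_{i}$, factorizes $V_{1}=L_{1}^{*}L_{1}$ and $V_{12}=L_{1}^{*}L_{2}$, rewrites the criterion as $\xi_{K}=\Vert L_{1}^{*}\Pi_{R(L_{1}A_{K}^{*})^{\perp}}L_{2}\Vert$, kills the error terms by orthogonality of $\varepsilon_{i}$ to $R(L_{1})$, and then argues separately in each direction using the linear independence of $X_{1},\dots,X_{p}$ in $L^{2}$ (a consequence of $V_{1}$ being invertible). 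You instead stay entirely in $\mathbb{R}^{p}$: you extract the model information once and for all through the identity $V_{12}=V_{1}B^{*}$ (the same place where independence and centering of $\varepsilon$ enter the paper's argument, but packaged algebraically), show that $V_{1}\Pi_{K}$ is an oblique projector onto $V_{1}\,\mathrm{Range}(A_{K}^{*})$, and reduce the lemma to the range inclusion $\mathrm{Range}(B^{*})\subseteq\mathrm{Range}(A_{K}^{*})$, which is read off coordinate-wise. Your version is more elementary and handles both implications simultaneously through a single chain of equivalences, whereas the paper's $L^{2}$ picture gives the criterion a geometric interpretation (residual of the regression of the $Y_{i}$'s on the selected $X_{j}$'s) that is consonant with the operator-theoretic machinery used in the rest of the paper. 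All the delicate points you flag are handled correctly: $\xi_{K}=0$ iff $V_{12}=V_{1}\Pi_{K}V_{12}$ because the Hilbert--Schmidt norm is a norm; $\Pi_{K}V_{1}\Pi_{K}=\Pi_{K}$ gives idempotence of $P_{K}=V_{1}\Pi_{K}$; the fixed-point set of a projector is its range; and cancelling $V_{1}$ in $V_{1}\,\mathrm{Range}(B^{*})\subseteq V_{1}\,\mathrm{Range}(A_{K}^{*})$ uses exactly the standing invertibility assumption on $V_{1}$.
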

\noindent This lemma permits to characterize the fact that an interger $i$ belongs to $I_0$. Indeed, since having $i\in I_0$ is equivalent to having  $I_1\subset I-\{i\}$, we deduce from this lemma that one has  $i\in I_0$ if, and only if,  $\xi_{K_i}=0$ where $K_i=I-\{i\}$. Then $I_1$ consists of the elements of $I$ for which $\xi_{K_i}$ does not vanish. Now, let us consider the unique permutation $\sigma$ \ of $I$ satisfying:

\bigskip

\textit{(i)} \ $\xi_{K\sigma\left(  1\right)  }\geq\xi_{K\sigma\left(
2\right)  }\geq\cdots\geq\xi_{K\sigma\left(  p\right)  };$

\textit{(ii)} \ $\xi_{K\sigma\left(  i\right)  }=\xi_{K\sigma\left(  j\right)
}$ and $i<j\;$imply $\sigma\left(  i\right)  <\sigma\left(  j\right)  $.

\bigskip

\noindent Since $I_0$ is a not empty, there exists an integer $s\in I$, that we call \textit{the dimensionality}, satisfying 
\[
\xi_{K\sigma\left(  1\right)  }\geq\xi_{K\sigma\left(
2\right)  }\geq\cdots\geq\xi_{K\sigma\left(  s\right)}>0=\xi_{K\sigma\left(  s+1\right)  }=\cdots=\xi_{K\sigma\left(  s+1\right)  }.
\]
Therefore, we obviously have  the following characterization of $I_1$:

\begin{lemma} $I_1=\{\sigma (k)\,/\,1\leq k\leq s\}$.
\end{lemma}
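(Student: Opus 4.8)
The plan is to leverage the two facts established immediately above the statement. Lemma~1, combined with the observation that $i\in I_0$ is equivalent to $I_1\subset I-\{i\}=K_i$, yields the pointwise characterization $i\in I_0 \iff \xi_{K_i}=0$; equivalently, $i\in I_1 \iff \xi_{K_i}>0$. Everything then reduces to reading off this equivalence through the sorting permutation $\sigma$ and the dimensionality $s$, so the argument is essentially bookkeeping.

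First I would unpack the definition of $s$. By construction of $\sigma$ (conditions \textit{(i)}--\textit{(ii)}) and the defining chain of (in)equalities for $s$, one has $\xi_{K_{\sigma(k)}}>0$ for every $k$ with $1\leq k\leq s$ and $\xi_{K_{\sigma(k)}}=0$ for every $k$ with $s<k\leq p$. Combining each of these with the pointwise equivalence gives $\sigma(k)\in I_1$ whenever $1\leq k\leq s$, and $\sigma(k)\in I_0$ whenever $s<k\leq p$. In particular one obtains the inclusion $\{\sigma(k)\,/\,1\leq k\leq s\}\subseteq I_1$, and symmetrically $\{\sigma(k)\,/\,s<k\leq p\}\subseteq I_0$.

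To upgrade these inclusions to the desired equality I would use a complementation argument. Since $\sigma$ is a permutation of $I$, the sets $S_1:=\{\sigma(k)\,/\,1\leq k\leq s\}$ and $S_0:=\{\sigma(k)\,/\,s<k\leq p\}$ form a partition of $I$; likewise $I_1$ and $I_0$ partition $I$. As $S_1\subseteq I_1$ and $S_0\subseteq I_0$ with $S_1\cap S_0=\varnothing=I_1\cap I_0$ and $S_1\cup S_0=I=I_1\cup I_0$, both inclusions are forced to be equalities; hence $I_1=S_1=\{\sigma(k)\,/\,1\leq k\leq s\}$.

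The step where all the mathematical content actually lives is the pointwise equivalence $i\in I_1 \iff \xi_{K_i}>0$, but that is furnished by Lemma~1, which I am entitled to assume. Given it, there is no genuine obstacle: the reasoning is pure bookkeeping about the sorting permutation, which is precisely why the authors call the characterization \emph{obvious}. The only point deserving a moment of care is to keep the two inclusions $S_1\subseteq I_1$ and $S_0\subseteq I_0$ aligned with the partition structure, so that the complementation argument closes, rather than trying to assert equality directly from a single inclusion.
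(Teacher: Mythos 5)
Your proof is correct and follows exactly the route the paper intends (and treats as obvious): the pointwise equivalence $i\in I_1 \iff \xi_{K_i}>0$ from Lemma~1, read through the definitions of $\sigma$ and $s$, plus the routine partition/complementation bookkeeping. Nothing is missing, and no genuinely different idea is introduced.
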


\noindent This result shows that estimation of $I_1$ reduces to that of the two parameters $\sigma $ and $s$. So, our method for selecting variables will be based on estimating these parameters; in the next subsection, an estimator of the used criterion will be introduced. That will be the basis of the proposed procedure for variable selection.
}
\subsectionn{Estimation of the criterion}
\label{subsec:estim}

{ \fontfamily{times}\selectfont
 \noindent
Recalling that we have an  i.i.d. sample $\left(X^{(k)},Y^{(k)}\right)_{1\leq k\leq n}$ of $(X,Y)$, we consider the sample means

\[
\overline{X}^{(n)} = n^{-1}\sum_{k=1}^{n}X^{(k)}, \hspace{0.2cm} \overline{Y}^{(n)} = n^{-1}\sum_{k=1}^{n}Y^{(k)},
\]
and the empirical covariance operators
\[
\widehat{V}_{1}^{(n)} = n^{-1}\sum_{k=1}^{n}(X^{(k)} - \overline{X}^{(n)})\otimes(X^{(k)} - \overline{X}^{(n)}), 
\]
and
\[
\widehat{V}_{12}^{(n)} = n^{-1}\sum_{k=1}^{n}(Y^{(k)} - \overline{Y}^{(n)})\otimes(X^{(k)} - \overline{X}^{(n)}).
\]
Then, for any $K\subset I$, an estimator of $\xi_K$ is given by

\[ 
\widehat{\xi}_{K}^{(n)} = \| \widehat{V}_{12}^{(n)} - \widehat{V}_{1}^{(n)}\widehat{\Pi}^{(n)}_K \widehat{V}_{12}^{(n)}\|
\]
where $\widehat{\Pi}^{(n)}_K = A_{K}^{*}(A_{K}V_1^{(n)} A_{K}^{*})^{-1}A_{K}$. The  result given below  permits to obtain asymptotic properties of this  estimator. As usual, when $E$ and  $F$ are Euclidean vector spaces, we denote by $\mathcal{L}(E,F)$ the vector space of operators from $E$ to $F$. When $E=F$, we simply write $\mathcal{L}(E)$ instead of $\mathcal{L}(E,E)$. Each element $A$ of $\mathcal{L}(\mathbb{R}^{p+q})$ can be writen as
\[
A=\left(
\begin{array}{ccc}
A_{11} &  & A_{12}\\
 & & \\
A_{21} &  & A_{22}
\end{array}
\right)
\]
where $ A_{11}\in\mathcal{L}(\mathbb{R}^p)$,   $ A_{12}\in\mathcal{L}(\mathbb{R}^q,\mathbb{R}^p)$,  $ A_{21}\in\mathcal{L}(\mathbb{R}^p,\mathbb{R}^q)$ and  $ A_{22}\in\mathcal{L}(\mathbb{R}^q)$. Then we consider the projectors
\[
P_1\, :\,
A\in \mathcal{L}(\mathbb{R}^{p+q}) \mapsto A_{11}\in \mathcal{L}(\mathbb{R}^{p})
\,\,\,\textrm{ and }\,\,\,
P_2\, :\,
A\in \mathcal{L}(\mathbb{R}^{p+q}) \mapsto A_{12}\in \mathcal{L}(\mathbb{R}^{q},\mathbb{R}^{p}),
\]
and we have:
\begin{proposition} 
We have 
\[
\sqrt{n}\,\widehat{\xi}_{K}^{(n)}=\|\widehat{\Psi}_{K}^{(n)}(\widehat{H}^{(n)}) + \sqrt{n}\,\delta_{K}\|, 
\]
where 
$\delta_K=V_{12} - V_1 \Pi_{K} V_{12}$, 
$(\widehat{\Psi}_{K}^{(n)})_{n \in \mathbb{N}^{*}}$ is a sequence of random operators  which  converges almost surely, as $n\rightarrow +\infty$,   to  the operator $\Psi_{K}$ of $\mathcal{L}(\mathcal{L}(\mathbb{R}^{p+q}),\mathcal{L}(\mathbb{R}^q,\mathbb{R}^p))$ given by:
\[
\Psi_{K}(A) = P_2(A) - P_1(A)\Pi_{K}V_{12} + V_1\Pi_{K}P_1(A)\Pi_{K}V_{12} - V_1\Pi_{K}P_2(A),
\]
and $(\widehat{H}^{(n)})_{n \in \mathbb{N}^{*}}$ is a sequence of random variables valued into $\mathcal{L}(\mathbb{R}^{p+q})$ which converges  in distribution to random variable $H$ having a normal distributon with mean $0$ and  covariance operator given by: 
\[
\Gamma=\mathbb{E}\left((Z\otimes Z-V)\widetilde{\otimes}(Z\otimes Z-V)\right),
\] 
$Z$  being  the $\mathbb{R}^{p+q}$-valued random variable given by
\[
Z =\left(\begin{array}{c} 
X\\
Y
\end{array}\right)
\] 
and $\tilde{\otimes}$ is the tensor product between  elements of $\mathcal{L}(\mathbb{R}^{p+q})$ related to the inner product $<A,B>=tr\left(A^\ast B\right)$.
\end{proposition}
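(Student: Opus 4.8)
The plan is to realise the empirical criterion as an \emph{exact} linear image of a normalised empirical covariance fluctuation, so that the stated identity holds with no remainder, and then to read off the two convergence statements from, respectively, a central limit theorem and the strong law. First I would introduce the full empirical covariance operator. Writing $V=\mathbb{E}(Z\otimes Z)\in\mathcal{L}(\mathbb{R}^{p+q})$ and, with $\overline{Z}^{(n)}=n^{-1}\sum_{k=1}^n Z^{(k)}$,
\[
\widehat{V}^{(n)}=n^{-1}\sum_{k=1}^n\bigl(Z^{(k)}-\overline{Z}^{(n)}\bigr)\otimes\bigl(Z^{(k)}-\overline{Z}^{(n)}\bigr),
\]
one checks from the block structure of $Z=(X,Y)^{T}$ that $P_1(V)=V_1$, $P_2(V)=V_{12}$, $P_1(\widehat{V}^{(n)})=\widehat{V}_1^{(n)}$ and $P_2(\widehat{V}^{(n)})=\widehat{V}_{12}^{(n)}$. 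I then set $\widehat{H}^{(n)}=\sqrt{n}\,(\widehat{V}^{(n)}-V)$, so that by construction $P_1(\widehat{H}^{(n)})=\sqrt{n}\,(\widehat{V}_1^{(n)}-V_1)$ and $P_2(\widehat{H}^{(n)})=\sqrt{n}\,(\widehat{V}_{12}^{(n)}-V_{12})$. Since $(Z^{(k)}\otimes Z^{(k)})_{k}$ are i.i.d.\ elements of the Hilbert space $\bigl(\mathcal{L}(\mathbb{R}^{p+q}),\langle A,B\rangle=\mathrm{tr}(A^{\ast}B)\bigr)$ with mean $V$ and, since $\mathbb{E}(\|Z\|^4)\le 2\,\mathbb{E}(\|X\|^4_{\mathbb{R}^p})+2\,\mathbb{E}(\|Y\|^4_{\mathbb{R}^q})<+\infty$, finite second moment, the central limit theorem in finite dimension yields $\sqrt{n}\,(n^{-1}\sum_k Z^{(k)}\otimes Z^{(k)}-V)\to H\sim\mathcal{N}(0,\Gamma)$ in distribution, with $\Gamma$ as stated. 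The mean-centering contributes $-\sqrt{n}\,\overline{Z}^{(n)}\otimes\overline{Z}^{(n)}$, which is $O_{P}(n^{-1/2})$ because $Z$ is centered, so Slutsky's lemma gives $\widehat{H}^{(n)}\to H$ in distribution.

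The heart of the proof is an exact algebraic factorisation of $\widehat{\delta}_K^{(n)}-\delta_K$, where $\widehat{\delta}_K^{(n)}=\widehat{V}_{12}^{(n)}-\widehat{V}_1^{(n)}\widehat{\Pi}_K^{(n)}\widehat{V}_{12}^{(n)}$ so that $\widehat{\xi}_K^{(n)}=\|\widehat{\delta}_K^{(n)}\|$. Setting $\Delta_1=\widehat{V}_1^{(n)}-V_1$, $\Delta_{12}=\widehat{V}_{12}^{(n)}-V_{12}$ and applying the resolvent identity $\widehat{M}^{-1}-M^{-1}=-\widehat{M}^{-1}(\widehat{M}-M)M^{-1}$ to $M=A_KV_1A_K^{\ast}$ and $\widehat{M}=A_K\widehat{V}_1^{(n)}A_K^{\ast}$, I obtain the \emph{exact} relation $\widehat{\Pi}_K^{(n)}-\Pi_K=-\widehat{\Pi}_K^{(n)}\Delta_1\Pi_K$. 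Introducing the idempotent oblique projectors $Q_K=V_1\Pi_K$ and $\widehat{Q}_K=\widehat{V}_1^{(n)}\widehat{\Pi}_K^{(n)}$, this propagates to the exact identity $\widehat{Q}_K-Q_K=(I-\widehat{Q}_K)\Delta_1\Pi_K$. Writing $\delta_K=(I-Q_K)V_{12}$ and $\widehat{\delta}_K^{(n)}=(I-\widehat{Q}_K)\widehat{V}_{12}^{(n)}$ and substituting, all the products recombine into the clean factorisation
\[
\widehat{\delta}_K^{(n)}-\delta_K=(I-\widehat{Q}_K)\bigl[\Delta_{12}-\Delta_1\Pi_K V_{12}\bigr].
\]

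Finally I would multiply by $\sqrt{n}$ and use $\sqrt{n}\,\Delta_1=P_1(\widehat{H}^{(n)})$ and $\sqrt{n}\,\Delta_{12}=P_2(\widehat{H}^{(n)})$ to read off
\[
\sqrt{n}\,(\widehat{\delta}_K^{(n)}-\delta_K)=\widehat{\Psi}_K^{(n)}(\widehat{H}^{(n)}),\qquad \widehat{\Psi}_K^{(n)}(A)=(I-\widehat{Q}_K)\bigl[P_2(A)-P_1(A)\,\Pi_K V_{12}\bigr].
\]
Hence $\sqrt{n}\,\widehat{\delta}_K^{(n)}=\widehat{\Psi}_K^{(n)}(\widehat{H}^{(n)})+\sqrt{n}\,\delta_K$, and since the operator norm is homogeneous, taking norms yields the displayed formula for $\sqrt{n}\,\widehat{\xi}_K^{(n)}$. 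The almost sure convergence $\widehat{\Psi}_K^{(n)}\to\Psi_K$ then follows by expanding $\widehat{\Psi}_K^{(n)}(A)=P_2(A)-P_1(A)\Pi_K V_{12}-\widehat{Q}_K P_2(A)+\widehat{Q}_K P_1(A)\Pi_K V_{12}$: the strong law gives $\widehat{V}_1^{(n)}\to V_1$ a.s., invertibility of $A_KV_1A_K^{\ast}$ (a principal submatrix of the positive definite $V_1$) together with continuity of matrix inversion give $\widehat{\Pi}_K^{(n)}\to\Pi_K$ a.s., whence $\widehat{Q}_K\to V_1\Pi_K$ a.s.\ and the coefficients of $\widehat{\Psi}_K^{(n)}$ converge, yielding operator-norm convergence to $\Psi_K$. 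I expect the \emph{main obstacle} to be precisely the recognition that the resolvent identity, combined with the idempotency of $Q_K$ and $\widehat{Q}_K$, makes the factorisation exact rather than merely first order: this is what lets $\widehat{\Psi}_K^{(n)}$ be a genuine random linear operator with no Taylor remainder, and it is the step where the increments of the matrix inverse must be carried without discarding higher-order terms.
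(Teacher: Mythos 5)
Your proof is correct, and its probabilistic skeleton is the same as the paper's: both introduce $Z$, $V$, $\widehat{V}^{(n)}$ and $\widehat{H}^{(n)}=\sqrt{n}\,(\widehat{V}^{(n)}-V)$; both rely on the exact resolvent identity $\widehat{\Pi}_K^{(n)}-\Pi_K=-\widehat{\Pi}_K^{(n)}(\widehat{V}_1^{(n)}-V_1)\Pi_K$; and both conclude with the CLT applied to $n^{-1}\sum_{k}Z^{(k)}\otimes Z^{(k)}$, Slutsky's theorem for the $O_P(n^{-1/2})$ centering term $\sqrt{n}\,\overline{Z}^{(n)}\otimes\overline{Z}^{(n)}$, and the strong law for the almost sure part. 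What is genuinely different is the algebraic decomposition, hence the random operator exhibited. The paper telescopes $\widehat{V}_{12}^{(n)}-\widehat{V}_1^{(n)}\widehat{\Pi}_K^{(n)}\widehat{V}_{12}^{(n)}-\delta_K$ into four increments and substitutes the resolvent identity into the term involving $\widehat{\Pi}_K^{(n)}-\Pi_K$, obtaining
\[
\widehat{\Psi}_{K}^{(n)}(A)=P_2(A) - P_1(A)\widehat{\Pi}_{K}^{(n)}\widehat{V}_{12}^{(n)} + V_1\widehat{\Pi}^{(n)}_{K}P_{1}(A)\Pi_{K}\widehat{V}_{12}^{(n)} - V_1\Pi_{K}P_2(A),
\]
a mixture of empirical and population factors whose almost sure convergence is then checked by a term-by-term operator-norm bound. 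You instead propagate the resolvent identity through the oblique projector $\widehat{Q}_K=\widehat{V}_1^{(n)}\widehat{\Pi}_K^{(n)}$, yielding the exact relation $\widehat{Q}_K-Q_K=(I-\widehat{Q}_K)(\widehat{V}_1^{(n)}-V_1)\Pi_K$ and the factored operator $\widehat{\Psi}_K^{(n)}(A)=(I-\widehat{Q}_K)\bigl(P_2(A)-P_1(A)\Pi_K V_{12}\bigr)$. The two sequences of random operators differ, but that is immaterial: the proposition asserts the existence of \emph{some} sequence with the stated limit, and expanding $(I-V_1\Pi_K)\bigl(P_2(A)-P_1(A)\Pi_K V_{12}\bigr)$ recovers exactly the $\Psi_K$ of the statement. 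Your form buys a shorter convergence step (only $\widehat{Q}_K\to Q_K$ almost surely is needed, rather than the paper's four-term estimate), and your explicit fourth-moment justification of the CLT for $Z^{(k)}\otimes Z^{(k)}$ is a point the paper leaves implicit. One small correction: the exactness of your factorisation owes nothing to the idempotency of $Q_K$ and $\widehat{Q}_K$; it follows from the resolvent identity alone, via $\widehat{Q}_K-Q_K=\widehat{V}_1^{(n)}(\widehat{\Pi}_K^{(n)}-\Pi_K)+(\widehat{V}_1^{(n)}-V_1)\Pi_K=(I-\widehat{Q}_K)(\widehat{V}_1^{(n)}-V_1)\Pi_K$, so your closing remark about the ``main obstacle'' slightly misattributes the mechanism.
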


}

\sectionn{Selection of variables}
\label{sec:MLE}

{ \fontfamily{times}\selectfont
 \noindent
Lemma 2 shows that estimation of  $I_{1}$
reduces to that of $\sigma$ and $s$. In this section, estimators for these two parameters are
proposed and  consistency properties are established for them.
\subsectionn{Estimation of $\sigma$ and $s$}
\label{subsec:estimpar}

{ \fontfamily{times}\selectfont
 \noindent
Let us consider a sequence $\left(  f_{n}\right)  _{n\in\mathbb{N}^{\ast}}$ of
\ functions from $I$ \ to $\mathbb{R}_{+}$ such that there exists a real
$\alpha\in\left]  0,1/2\right[  $ and a strictly decreasing function
\ $f\;:\;I\rightarrow\mathbb{R}_{+}$ satisfying:
\[
\forall i\in I,\;\;\lim_{n\rightarrow+\infty}\left(  n^{\alpha}\;f_{n}\left(
i\right)  \right)  =f\left(  i\right)  .
\]
Then, recalling that $K_{i}=I-\left\{  i\right\}  $, we put
\[
\widehat{\phi}_{i}^{\left(  n\right)  }=\widehat{\xi}_{K_{i}}^{\left(
n\right)  }+f_{n}\left(  i\right)  \text{\ \ \ \ (}i\in I\text{)}
\]
and we take as estimator of $\sigma$ the random permutation $\widehat{\sigma
}^{\left(  n\right)  }$ of $I$ such that

\[
\widehat{\phi}_{\widehat{\sigma}^{\left(  n\right)  }\left(  1\right)
}^{\left(  n\right)  }\geq\widehat{\phi}_{\widehat{\sigma}^{\left(  n\right)
}\left(  2\right)  }^{\left(  n\right)  }\geq\cdots\geq\widehat{\phi
}_{\widehat{\sigma}^{\left(  n\right)  }\left(  p\right)  }^{\left(  n\right)
}
\]
and if  $\widehat{\phi}_{\widehat{\sigma}^{\left(  n\right)  }\left(
i\right)  }^{\left(  n\right)  }=\widehat{\phi}_{\widehat{\sigma}^{\left(
n\right)  }\left(  j\right)  }^{\left(  n\right)  }$  with  $i<j$,
then $\widehat{\sigma}^{\left(  n\right)  }\left(  i\right)
<\widehat{\sigma}^{\left(  n\right)  }\left(  j\right)$.
Furthermore, we consider the random set  $\widehat{J}_{i}^{\left(  n\right)  }=\left\{
\widehat{\sigma}^{\left(  n\right)  }\left(  j\right)  ;\;1\leq j\leq
i\right\}  $ and the random variable
\[
\widehat{\psi}_{i}^{\left(  n\right)  }=\widehat{\xi}_{\widehat{J}%
_{i}^{\left(  n\right)  }}^{\left(  n\right)  }+g_{n}\left(  \widehat{\sigma
}^{\left(  n\right)  }\left(  i\right)  \right)  \;\;\;\;\;\text{(}i\in
I\text{)}
\]
where $\left(  g_{n}\right)  _{n\in\mathbb{N}^{\ast}}$ is a sequence of
\ functions from $I$ to \ $\mathbb{R}_{+}$ such that there exist a real $\beta
\in\left]  0,1\right[  $ and a strictly increasing function
\ $g\;:\;I\rightarrow\mathbb{R}_{+}$ satisfying:
\[
\forall i\in I,\;\;\lim_{n\rightarrow+\infty}\left(  n^{\beta}\;g_{n}\left(
i\right)  \right)  =g\left(  i\right)  .
\]
Then, we take as estimator of $s$ the random variable
\[
\widehat{s}^{\left(  n\right)  }=\min\left\{  i\in I\;/\;\widehat{\psi}%
_{i}^{\left(  n\right)  }=\min_{j\in I}\left(  \widehat{\psi}_{j}^{\left(
n\right)  }\right)  \right\}  .
\]
 The variable
selection is achieved by taking the random set
\[
\widehat{I}_{1}^{\left(  n\right)  }=\left\{  \widehat{\sigma}^{\left(
n\right)  }\left(  i\right)  \;;\;1\leq i\leq\widehat{s}^{\left(  n\right)
}\right\}
\]
  as estimator of $I_{1}$.

}
\subsectionn{Consistency}
\label{subsec:cons}

{ \fontfamily{times}\selectfont
 \noindent
The following theorem establishes consistency for the preceding estimators :

\begin{theorem}
We have:

(i) $\lim_{n\rightarrow+\infty}P\left(  \widehat{\sigma}^{\left(
n\right)  }=\sigma\right)  =1;$

(ii) $\widehat{s}^{\left(  n\right)  }$  converges in
probability to $s$,  as $n\rightarrow+\infty$.
\end{theorem}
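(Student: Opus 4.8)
The plan is to reduce everything to the asymptotics supplied by the Proposition, namely to the fact that $\widehat{H}^{(n)}$ is bounded in probability (it converges in distribution) while $\widehat{\Psi}^{(n)}_K\to\Psi_K$ almost surely. First I would record the two regimes for the estimated criterion. Writing $\widehat{\xi}^{(n)}_K=n^{-1/2}\|\widehat{\Psi}^{(n)}_K(\widehat{H}^{(n)})+\sqrt{n}\,\delta_K\|$, the triangle inequality gives $\widehat{\xi}^{(n)}_K=\xi_K+O_P(n^{-1/2})$ whenever $\xi_K=\|\delta_K\|>0$, and $\widehat{\xi}^{(n)}_K=n^{-1/2}\|\widehat{\Psi}^{(n)}_K(\widehat{H}^{(n)})\|=O_P(n^{-1/2})$ whenever $\delta_K=0$, i.e. $\xi_K=0$. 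In particular $\widehat{\xi}^{(n)}_K\to\xi_K$ in probability for every $K\subset I$, and in the null case the fluctuations are of exact order $n^{-1/2}$; these two facts are all I will need.

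For part (i) I would argue pairwise. Fix $i\neq j$ and compare $\widehat{\phi}^{(n)}_i-\widehat{\phi}^{(n)}_j=(\widehat{\xi}^{(n)}_{K_i}-\widehat{\xi}^{(n)}_{K_j})+(f_n(i)-f_n(j))$. If $\xi_{K_i}\neq\xi_{K_j}$ the difference converges to the nonzero constant $\xi_{K_i}-\xi_{K_j}$ (both correction terms vanish), so its sign is correct with probability tending to $1$. If instead $\xi_{K_i}=\xi_{K_j}$ with, say, $i<j$, then the true permutation $\sigma$ ranks $i$ ahead of $j$ by the tie-breaking rule (ii); here $\widehat{\xi}^{(n)}_{K_i}-\widehat{\xi}^{(n)}_{K_j}=O_P(n^{-1/2})$ while $n^{\alpha}(f_n(i)-f_n(j))\to f(i)-f(j)>0$ since $f$ is strictly decreasing, so multiplying by $n^{\alpha}$ and using $\alpha<1/2$ gives $n^{\alpha}(\widehat{\phi}^{(n)}_i-\widehat{\phi}^{(n)}_j)\to f(i)-f(j)>0$ in probability, whence $\widehat{\phi}^{(n)}_i>\widehat{\phi}^{(n)}_j$ with probability tending to $1$. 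Thus for every ordered pair the relative ranking under $\widehat{\sigma}^{(n)}$ agrees with that under $\sigma$ with probability tending to $1$; since there are finitely many pairs, a union bound yields $P(\widehat{\sigma}^{(n)}=\sigma)\to1$.

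For part (ii) I would work on the event $\{\widehat{\sigma}^{(n)}=\sigma\}$, which by (i) has probability tending to $1$; there $\widehat{J}^{(n)}_i=J_i:=\{\sigma(1),\dots,\sigma(i)\}$ and $\widehat{\psi}^{(n)}_i=\widehat{\xi}^{(n)}_{J_i}+g_n(\sigma(i))$. By Lemma 1 together with Lemma 2 one has $I_1\subset J_i$ exactly when $i\geq s$, so $\xi_{J_i}=0$ for $i\geq s$ and $\xi_{J_i}>0$ for $i<s$. Consequently $\widehat{\psi}^{(n)}_i\to\xi_{J_i}>0$ for $i<s$ whereas $\widehat{\psi}^{(n)}_i\to0$ for $i\geq s$; since $\min_j\widehat{\psi}^{(n)}_j\leq\widehat{\psi}^{(n)}_s\to0$, no index below $s$ can attain the minimum for large $n$, which gives $\widehat{s}^{(n)}\geq s$ with probability tending to $1$. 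It then remains to rule out over-selection, i.e. to show $\widehat{\psi}^{(n)}_s\leq\widehat{\psi}^{(n)}_i$ for every $i>s$ with probability tending to $1$.

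This last comparison is the main obstacle. For $i>s$ both $\xi_{J_i}$ and $\xi_{J_s}$ vanish, so $\widehat{\xi}^{(n)}_{J_i}-\widehat{\xi}^{(n)}_{J_s}=O_P(n^{-1/2})$ with no useful sign: enlarging the model from $J_s$ to $J_i$ tends to reduce the empirical criterion, so the gain in $\widehat{\xi}^{(n)}$ pushes in the unfavourable direction. The mechanism must therefore be that the penalty increment $g_n(\sigma(i))-g_n(\sigma(s))$ overcomes this difference: it is of exact order $n^{-\beta}$, and the conditions on $(g_n)$ — a strictly increasing limit $g$ evaluated along the estimated ranking — are designed to render this increment positive and, \emph{provided the penalty rate $n^{-\beta}$ dominates the estimation rate $n^{-1/2}$}, asymptotically larger than the $O_P(n^{-1/2})$ fluctuation of $\widehat{\xi}^{(n)}_{J_i}-\widehat{\xi}^{(n)}_{J_s}$. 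Establishing that the penalty genuinely separates the correct dimension from every larger one — controlling the joint behaviour of the null-case fluctuations $\{\widehat{\xi}^{(n)}_{J_i}\}_{i\geq s}$ against the deterministic penalty gaps, uniformly over the finitely many $i>s$ — is the delicate step. Once it is in hand, $\widehat{s}^{(n)}\leq s$ follows, and combining with $\widehat{s}^{(n)}\geq s$ gives $P(\widehat{s}^{(n)}=s)\to1$, which is exactly the asserted convergence in probability of the integer-valued $\widehat{s}^{(n)}$ to $s$.
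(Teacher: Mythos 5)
Everything you actually prove is correct, and up to that point your route is essentially the paper's: the two regimes you extract from Proposition 1 via the reverse triangle inequality ($\widehat{\xi}^{(n)}_K=\xi_K+O_P(n^{-1/2})$ always, hence $\widehat{\xi}^{(n)}_K=O_P(n^{-1/2})$ when $\delta_K=0$) subsume the paper's Lemma 3 by a shorter argument than its two-case computation; your pairwise treatment of part (i) is complete; and your proof that $P(\widehat{s}^{(n)}\geq s)\rightarrow 1$ is fine. But the proposal is not a proof of the theorem, because the half of part (ii) you label ``the delicate step'' --- that $\widehat{\psi}^{(n)}_i\geq\widehat{\psi}^{(n)}_s$ for all $i>s$ with probability tending to one --- is never established, and that step is where the entire content of (ii) lives. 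It is worth knowing that the paper does exactly the same thing: its proof consists of Lemma 3 plus the sentence that the rest is ``similar to'' Theorem 3.1 of \cite{Nk12}. So you have reproduced (and in places streamlined) everything the paper proves explicitly, and stopped precisely where the paper defers; a blind proof attempt cannot outsource that step.

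Moreover, the mechanism you gesture at cannot close the gap under the stated hypotheses, so this is a missing idea rather than a routine omission. Two concrete obstructions. First, on $\{\widehat{\sigma}^{(n)}=\sigma\}$ the penalty increment is $g_n(\sigma(i))-g_n(\sigma(s))\sim n^{-\beta}\left(g(\sigma(i))-g(\sigma(s))\right)$, whose sign is that of $\sigma(i)-\sigma(s)$; since $\sigma(i)\in I_0$ while $\sigma(s)\in I_1$, nothing forces $\sigma(i)>\sigma(s)$. For instance, if $I_0=\{1\}$ and $I_1=\{2,3\}$, then $\sigma(3)=1<\sigma(2)$ and the increment is \emph{negative}, so when the penalty dominates it pushes toward over-selection, not away from it. Second, even when the sign is favourable, dominance over the $O_P(n^{-1/2})$ fluctuation of $\widehat{\xi}^{(n)}_{J_i}-\widehat{\xi}^{(n)}_{J_s}$ requires $\beta<1/2$, whereas the hypothesis only gives $\beta\in\left]0,1\right[$ (and the paper's own simulation takes $\beta=3/4$). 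The case $i=p$ makes the difficulty stark: since $\widehat{\Pi}^{(n)}_I=(\widehat{V}^{(n)}_1)^{-1}$ one has $\widehat{\xi}^{(n)}_I=0$ identically, so
\[
\widehat{\psi}^{(n)}_p-\widehat{\psi}^{(n)}_s=-\widehat{\xi}^{(n)}_{J_s}+g_n(\sigma(p))-g_n(\sigma(s))
=-n^{-1/2}\Vert\widehat{\Psi}^{(n)}_{J_s}(\widehat{H}^{(n)})\Vert+O(n^{-\beta}),
\]
which for $\beta>1/2$ is negative with probability tending to one unless $\Psi_{J_s}(H)$ is degenerate at $0$. So your proviso ``provided the penalty rate $n^{-\beta}$ dominates the estimation rate $n^{-1/2}$'' is doing all the work and is not available; any complete proof of (ii) must either impose $\beta<1/2$ together with a condition ensuring $\min I_0>\sigma(s)$ (or re-index the penalty as $g_n(i)$ rather than $g_n(\widehat{\sigma}^{(n)}(i))$), or replace the criterion by one whose null fluctuations are $O_P(n^{-1})$ so that $\beta<1$ suffices. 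Flagging the obstacle honestly is to your credit, but it leaves the theorem unproved.
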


\noindent As a consequence of this theorem, we easily obtain:
$\lim_{n\rightarrow+\infty
}P\left(  \widehat{I}_{1}^{\left(  n\right)  }=I_{1}\right)  =1$. This shows the consistency of our method for selecting variables in the model (\ref{eq:1}).

}

 }

\sectionn{Simulations}
\label{sec:K-M}

{ \fontfamily{times}\selectfont
 \noindent
In this section, we report  results of a simulation study which was made in order to  check the efficacy of the proposed approach and to compare it with an existing method: the ASCCA method introduced by An \textit{et al.} (2013). This latter method is based on re-casting the multivariate regression problem as a classical CCA problem for which a least quares type formulation is constructed, and applying an  adaptative LASSO type penalty together with a BIC-type selection criterion  (see \cite{An13}  for more details).  Our simulated data is based on two independent data sets: training data and test data, each with sample size $n=50,\,100,\,500,\,800,\,1000,\,2000$.  The training data is used for selecting variables by using both  our method, with penalty terms  $f_{n}\left(  i\right)
=n^{-1/4}\;i^{-1}$and \ $g_{n}\left(  i\right)  =n^{-3/4}\;i$,  and the ASCCA method. The test data is used for computing prediction error given by
\[
e = \frac{1}{n}\sum_{k=1}^{n} \| Y^{(k)} - \widehat{Y}^{(k)} \|^{2}_{\mathbb{R}^q},
\]
where $ Y^{(k)}$ is an observed response and $\widehat{Y}^{(k)}$ is the usual linear predictor of $Y^{(k)}$ computed by using the variables selected at the previous step, that is $\widehat{Y}^{(k)}=\left(\mathbb{X}^T\mathbb{X}\right)^{-1}Y^{(k)}$ where $\mathbb{X}$ is a matrix with $n$ rows and columns containing the observations of the $X_j$'s that have been  selected in the previous step. Each data set was generated as follows: $X^{(k)}$ is generated from a multivariate normal distribution in $\mathbb{R}^7$ with mean $0$ and covariance $cov(X^{(k)}_i,X^{(k)}_j)=0.5^{\vert i-j\vert}$ for any $1\leq i,j\leq 7$, and the corresponding response  $ Y^{(k)}$ is generated according to (\ref{eq:1}) with
\[
B=\left(
\begin{array}{ccccccc}
3 & 0 & 0 & 1.5 & 0 & 0 & 2\\
4 & 0 & 0 & 2.5 & 0 & 0 & -1\\
5 & 0 & 0 & 0.5 & 0 & 0 & 3\\
6 & 0 & 0 & 3 & 0 & 0 & 1\\
7 & 0 & 0 &  6 & 0 & 0 & 4
\end{array}
\right)
\]
and the related error term  $\varepsilon^{(k)}$ having a multivariate normal distribution in $\mathbb{R}^5$ with mean $0$ and covariance matrix $0.5\,I_5$, where $I_5$ denotes the $5$-dimensional identity matrix. The outputs of the numerical experiment are the averages of the aforementioned prediction errors over 2000 independent replications.  The results  are reported in Table 1. Our method  gives the better results for  $n\geq 100$ but was outperformed by the ASCCA method for $n=50$.

 \fancyfoot{}
 \fancyfoot[C]{\leavevmode
 \put(0,0){\color{lightaqua}\circle*{34}}
 \put(0,0){\color{myaqua}\circle{34}}
 \put(-5,-3){\color{myaqua}\thepage}}

\begin{table}
\centering \caption{Average of prediction errors over 2000 replications}
%\flushleft
 {\begin{tabular}{ccccc} 
\hline\hline
 \rowcolor{lightaqua}  Sample size  &  & Proposed  method &    & ASCCA\\
\hline 
 & & & & \\
 50  &     & 0.00105  &  & 5.323e-6 \\
  100  &  & 0.00012  &  & 0.00052 \\
   500  &  & 1.009e-6 &  & 9.075e-6\\
800  &  & 20602e-7  & & 5.789e-7 \\
    1000  &  & 1.243e-7 &  & 1.308e-7 \\
    2000  &  & 1.436e-8 &  & 1.692e-8 \\[2mm]

\hline
\end{tabular}}
\label{table:simulation_est}
\end{table}

}

\sectionn{Proofs}
\label{subsec:proofs}

{ \fontfamily{times}\selectfont
 \noindent

\subsectionn{Proof of Lemma 1}
\label{subsec:lem1}

{ \fontfamily{times}\selectfont
 \noindent
Denoting by $(\Omega, \mathcal{A},P)$ the considered probability space,  we consider the operators:
\[
L_1 : x =\left(
\begin{array}{c}
x_1\\
\vdots\\
x_p
\end{array}
\right)
 \in \mathbb{R}^{p} \longmapsto \sum_{j=1}^{p}x_jX_j \in L^{2}(\Omega, \mathcal{A},P)\textrm{ and }
L_2: y = \left(
\begin{array}{c}
y_1\\
\vdots\\
y_q
\end{array}
\right) \in \mathbb{R}^{q}  \longmapsto \sum_{i=1}^{q}y_iY_i \in L^{2}(\Omega, \mathcal{A},P)
\]
with  adjoints are  respectively given  by:
\[
L_1^{*} : Z \in L^{2}(\Omega, \mathcal{A},P) \longmapsto \E(ZX) \in \mathbb{R}^{p},
\textrm{ and }L_2^{*} : Z \in L^{2}(\Omega, \mathcal{A},P) \longmapsto \E(ZY) \in \mathbb{R}^{q}.
\]
It is easy to verify that $L_1^{*}L_1 = V_1$ and $L_1^{*}L_2 = V_{12}$. Denoting by $R(A)$ the range of the operator $A$, and from the fact that the orthogonal projector $\Pi_{R(A)}$ onto $R(A)$ is given by $\Pi_{R(A)}=A(A^\ast A)^{-1}A^\ast$,  we clearly have
\begin{eqnarray}\label{xiK}
\xi_{K} = \| L_1^{*}L_2 - L_1^{*}L_1  A_{K}^{*}(A_{K}L_1^{*}L_1 A_{K}^{*})^{-1}A_{K} L_1^{*}L_2 \|
= \|L_1^{*}L_2 -  L_1^{*} \Pi_{R(L_1 A_{K}^{*})} L_2 \| 
        = \| L_1^{*} \Pi_{R(L_1 A_{K}^{*})^{\bot}} L_2 \|,
\end{eqnarray}
where $E^\bot $ denotes the orthogonal space of the vector space $E$. For any vector $\alpha=(\alpha_1,\cdots,\alpha_q)^T$ in $\mathbb{R}^{q}$, one has
\[
L_2(\alpha)=\sum_{i=1}^{q}\alpha_iY_i =\sum_{i=1}^{q}\alpha_i\left(\sum_{j=1}^{p}b_{ij}X_j + \varepsilon_i      \right) = \sum_{i=1}^{q}\sum_{j=1}^{p}\alpha_ib_{ij}X_j + \sum_{i=1}^{q}\alpha_i\varepsilon_i.
\]
Since for any $u=(u_1,\cdots,u_p)^T\in\mathbb{R}^p$, we have 
\[
<L_1(u),\alpha_i\varepsilon_i>=\sum_{j=1}^pu_j<X_j,\alpha_i\varepsilon_i>=\sum_{j=1}^pu_j\alpha_i\mathbb{E}\left(X_j\varepsilon_i\right)=\sum_{j=1}^pu_j\alpha_i\mathbb{E}\left(X_j\right)\mathbb{E}\left(\varepsilon_i\right)=0,
\]
it follows that $\alpha_i\varepsilon_i$ $\in$ $R(L_1)^{\bot}$ and, from $R(L_1)^\bot\subset R(L_1A_K^\ast)^\bot$, we obtain
\[
L_1^{*} \Pi_{R(L_1 A_{K}^{*}))^{\bot}}\alpha_{i} \varepsilon_{i} = L_1^{*} \alpha_{i} \varepsilon_{i}=\mathbb{E}\left(\alpha_i\varepsilon_iX\right)=\alpha_i \E(\varepsilon_i)\E(X) = 0.
\]
Thus,
\begin{eqnarray}\label{egalite}
L_1^{*} \Pi_{R(L_1 A_{K}^{*}))^{\bot}} L_2(\alpha) =
L_1^{*} \Pi_{R(L_1 A_{K}^{*}))^{\bot}}\sum_{i=1}^{q}\sum_{j=1}^{p}\alpha_ib_{ij}X_j
=
\sum_{i=1}^{q}\alpha_iL_1^{*} \Pi_{R(L_1 A_{K}^{*})^{\bot}}L_1(\textrm{\textbf{b}}_{i\bullet}),
\end{eqnarray}
where
\[
\textrm{\textbf{b}}_{i\bullet}=
 \left(
 \begin{array}{c}
  b_{i1}\\
  b_{i2}\\
  \vdots \\
  b_{ip}
 \end{array}
\right).
\]
If $\xi_K=0$, then  considering, for $i=1,\cdots,q$, the vector $\alpha=\left(0,\cdots,0,1,0,\cdots,0\right)$  of $\mathbb{R}^q$ whose coordinates are  null except the $i$-th one which equals $1$, we deduce from (\ref{egalite}) that $L_1^{*} \Pi_{R(L_1 A_{K}^{*})^{\bot}}L_1(\textrm{\textbf{b}}_{i\bullet})=0$. Since, for any operator $A$, ker$(A^\ast A)=$ker$(A)$, it follows that we have $\Pi_{R(L_1 A_{K}^{*})^{\bot}}L_1(\textrm{\textbf{b}}_{i\bullet})=0$, that is 
\begin{eqnarray}\label{inclus}
L_1(\textrm{\textbf{b}}_{i\bullet}) \in R(L_1 A_{K}^{*}).
\end{eqnarray}
Denoting by $\vert K\vert$ the cardinality of $K$ and putting $K=\{k_1,k_2,\cdots,k_{\vert K\vert}\}$, we deduce from (\ref{inclus}) that there exists a vector $\beta=\left(\beta_1,\cdots,\beta_{\vert K\vert}\right)^T\in\mathbb{R}^{\vert K\vert}$ such that $L_1(\textrm{\textbf{b}}_{i\bullet}) =L_1 A_{K}^{*}\beta$, that is
\[
\sum_{j=1}^{p}b_{ij}X_j  = \sum_{\ell=1}^{|K|} \beta_\ell X_{k_\ell}   
\]
and, equivalently,
\begin{eqnarray}\label{eqlineaire}
\sum_{\ell=1}^{\vert K\vert}\left(b_{i k_\ell} - \beta_\ell\right)X_{k_\ell}+\sum_{\ell\in I-K}b_{ij}X_j=0.
\end{eqnarray}
Since $V_1$ is invertible we have $\textrm{ker}(L_1)=\textrm{ker}(L_1^\ast L_1)=\textrm{ker}(V_1)=\{0\}$. Then, $X_1,\cdots,X_p$ are linearly independent and, therefore, (\ref{eqlineaire}) implies that, for all $j\in I-K$, $bij=0$. This property holds for any $i\in\{1,\cdots,q\}$, then we deduce that $I-K\subset I_0$ and, equivalently, that $I_1\subset K$. Reciprocally, we first have 
\begin{eqnarray*}
L_1^{*} \Pi_{R(L_1 A_{K}^{*})^{\bot}}L_1(\textrm{\textbf{b}}_{i\bullet})&=&L_1^{*} \Pi_{R(L_1 A_{K}^{*})^{\bot}}\sum_{j=1}^{p}b_{ij}X_j\\
&=&L_1^{*} \Pi_{R(L_1 A_{K}^{*})^{\bot}}\left(\sum_{j\in K}b_{ij}X_{j}+\sum_{j\in I-K}b_{ij}X_j\right)\\
&=&L_1^{*} \Pi_{R(L_1 A_{K}^{*})^{\bot}}\left(\sum_{\ell=1}^{\vert K\vert}b_{ik_\ell}X_{k_\ell}+\sum_{j\in I-K}b_{ij}X_j\right).
\end{eqnarray*}
If  $I_1\subset K$, then $I-K\subset I_0$ and, consequently, for all $j\in I-K$,  $b_{ij}=0$. Thus
\[
L_1^{*} \Pi_{R(L_1 A_{K}^{*})^{\bot}}L_1(\textrm{\textbf{b}}_{i\bullet})=L_1^{*} \Pi_{R(L_1 A_{K}^{*})^{\bot}}\left(\sum_{\ell=1}^{\vert K\vert}b_{ik_\ell}X_{k_\ell}\right)
=L_1^{*} \Pi_{R(L_1 A_{K}^{*})^{\bot}}L_1A_K^\ast(\textrm{\textbf{b}}_{i\bullet})=0
\]
because $L_1A_K^\ast(\textrm{\textbf{b}}_{i\bullet})\in R(L_1A_K^\ast)$. Then, from (\ref{egalite}) and (\ref{xiK}), we deduce that $\xi_K=0$.

}

\subsectionn{Proof of Proposition 1}
\label{subsec:lem1}

{ \fontfamily{times}\selectfont
 \noindent
We have:
\begin{eqnarray*}
\sqrt{n}\widehat{\xi}_{K}^{(n)} = \|\sqrt{n}(\widehat{V}^{(n)}_{12} - V_{12}) &- &\sqrt{n}(\widehat{V}^{(n)}_{1} - V_1)\widehat{\Pi}^{(n)}_{K}\widehat{V}^{(n)}_{12} -  V_{1}\left(\sqrt{n}(\widehat{\Pi}^{(n)}_{K} - \Pi_{K})\right)\widehat{V}^{(n)}_{12} \\
&-& V_{1}\Pi_{K}\left(\sqrt{n}(\widehat{V}^{(n)}_{12} - V_{12})\right) + \sqrt{n}\delta_{K} \|,
\end{eqnarray*}
and since
\begin{eqnarray*}
\widehat{\Pi}^{(n)}_{K} - \Pi_{K} &=&  A^{*}_{K}\left((A_{K}\widehat{V}^{(n)}_{1}A^{*}_{K})^{-1} - (A_{K}V_{1}A^{*}_{K})^{-1}  \right)A_{K} \\
&=& A^{*}_{K}\left(-(A_{K}\widehat{V}^{(n)}_{1}A^{*}_{K})^{-1} \left(A_{K}\widehat{V}^{(n)}_{1}A^{*}_{K} - A_{K}V_{1}A^{*}_{K}\right) (A_{K}V_{1}A^{*}_{K})^{-1} \right)A_{K} \\
&=& -\widehat{\Pi}^{(n)}_{K}\left(\widehat{V}^{(n)}_{1} - V_{1}\right)\Pi_{K},
\end{eqnarray*}
it follows:
\begin{eqnarray}\label{decomp}
\sqrt{n}\widehat{\xi}_{K}^{(n)} &=& \|\sqrt{n}(\widehat{V}^{(n)}_{12} - V_{12}) - \sqrt{n}(\widehat{V}^{(n)}_{1} - V_1)\widehat{\Pi}^{(n)}_{K}\widehat{V}^{(n)}_{12} \\\nonumber
&+& V_{1}\widehat{\Pi}^{(n)}_{K}\left(\sqrt{n}\left(\widehat{V}^{(n)}_{1} - V_{1}\right)\right)\Pi_{K}\widehat{V}^{(n)}_{12} \\ \nonumber
&-& V_{1}\Pi_{K}\left(\sqrt{n}(\widehat{V}^{(n)}_{12} - V_{12})\right) + \sqrt{n}\delta_{K} \|.
\end{eqnarray}
Let us consider the $\mathbb{R}^{p+q}$-valued  random vectors  
\[
Z = \left(
 \begin{array}{c}
 X\\
 Y
 \end{array}
\right),\,\,\,
Z^{(k)} =\left(
 \begin{array}{c}
 X^{(k)}\\
 Y^{(k)}
 \end{array}\right),\,\,\,k=1,\cdots,n;
\]
the  covariance operator of $Z$ is given by  $V = \mathbb{E}(Z \otimes Z)$ and can be writen as
\begin{eqnarray}\label{covz}
V=\left(
\begin{array}{ccc}
V_1 & & V_{12}\\
   & & \\
V_{21} & & V_2
\end{array}
\right)
\end{eqnarray}
where $V_2= \mathbb{E}(Y \otimes Y)$ and $V_{21}=V_{12}^\ast$. Further, putting
\[
\overline{Z}^{(n)} = n^{-1}\sum_{k=1}^{n}Z^{(k)},\,\,\,\textrm{ and }\,\,\, 
\widehat{V}^{(n)} = n^{-1}\sum_{k=1}^{n}(Z^{(k)} - \overline{Z}^{(n)})\otimes(Z^{(k)} - \overline{Z}^{(n)}), 
\]
we can write
\begin{eqnarray}\label{covempz}
\widehat{V}^{(n)}=\left(
\begin{array}{ccc}
\widehat{V}^{(n)}_1 & & \widehat{V}^{(n)}_{12}\\
   & & \\
\widehat{V}^{(n)}_{21} & &\widehat{V}^{(n)}_2
\end{array}
\right)
\end{eqnarray}
where
$\widehat{V}_{2}^{(n)} = n^{-1}\sum_{k=1}^{n}(Y^{(k)} - \overline{Y}^{(n)})\otimes(Y^{(k)} - \overline{Y}^{(n)})$
and  $\widehat{V}_{21}^{(n)}=\left(\widehat{V}_{12}^{(n)}\right)^\ast$. Then we deduce from (\ref{decomp}), (\ref{covz}) and (\ref{covempz}) that 
$
\sqrt{n}\widehat{\xi}_{K}^{(n)}= \|\widehat{\Psi}_{K}^{(n)}(\widehat{H}^{(n)}) + \sqrt{n}\delta_{K}\|$, 
where $\widehat{H}^{(n)} = \sqrt{n}\left(\widehat{V}^{(n)} - V\right)$ and $\widehat{\Psi}_{K}^{(n)}$ is the random operator from $ \mathcal{L}(\mathbb{R}^{p+q})$ to $ \mathcal{L}(\mathbb{R}^{p})$ defined by 
\[
\forall A\in  \mathcal{L}(\mathbb{R}^{p+q}),\,\,\,\widehat{\Psi}_{K}^{(n)}(A)=P_2(A) - P_1(A)\widehat{\Pi}_{K}^{(n)}\widehat{V}_{12}^{(n)} + V_1\widehat{\Pi}^{(n)}_{K}P_{1}(A)\Pi_{A}\widehat{V}_{12}^{(n)} - V_1\Pi_{K}P_2(A).
\]
Considering the usual operators norm $\Vert\cdot\Vert_\infty$ defined in $\mathcal{L}(E,F)$ by $\Vert A\Vert_\infty=\sup_{x\in E-\{0\}}\Vert Ax\Vert_F/\Vert x \Vert_E$ and recalling that, for two operators $A$ and $B$, one has $\Vert AB\Vert_\infty\leq \Vert A\Vert_\infty\Vert B\Vert_\infty$, we obtain 
\begin{eqnarray*}
\|\widehat{\Psi}_{K}^{(n)}(A) - \Psi_{K}(A) \|_\infty &=& 
\left\Vert -P_1(A)\left(\widehat{\Pi}_{K}^{(n)}-\Pi_K\right)\widehat{V}_{12}^{(n)} - P_1(A)\Pi_K\left(\widehat{V}_{12}^{(n)}-V_{12}\right)\right.\\
& &\left. + V_1\left(\widehat{\Pi}_{K}^{(n)}-\Pi_K\right)P_1(A)\Pi_{K}\widehat{V}_{12}^{(n)}+-  V_1\Pi_{K}P_1(A)\Pi_{K}\left(\widehat{V}_{12}^{(n)}-V_{12}\right)\right\Vert _\infty\\
 &\leq& \|P_1(A)\|_\infty\left[\|\widehat{\Pi}_{K}^{(n)}-\Pi_{K}\|_\infty\|\widehat{V}_{12}^{(n)}\|_\infty
  +\|\Pi_{K}\|_\infty\|\widehat{V}_{12}^{(n)}-V_{12}\|_\infty\right. \\
  & & +\|V_1\|_\infty\|\Pi_{K}\|_\infty\|\widehat{\Pi}_{K}^{(n)}-\Pi_{K}\|_\infty\|\widehat{V}_{12}^{(n)}\|_\infty \\
& & \left. + \|V_1\|_\infty\|\Pi_{K}\|_\infty^{2}\|\widehat{V}_{12}^{(n)}-V_{12}\|_\infty\right] \\
&\leq & \left[\|\widehat{\Pi}_{K}^{(n)}-\Pi_{K}\|_\infty\|\widehat{V}_{12}^{(n)}\|_\infty 
  +\|\Pi_{K}\|_\infty\|\widehat{V}_{12}^{(n)}-V_{12}\|_\infty\right. \\
  & & +\|V_1\|_\infty\|\Pi_{K}\|_\infty\|\widehat{\Pi}_{K}^{(n)}-\Pi_{K}\|_\infty\|\widehat{V}_{12}^{(n)}\|_\infty \\
& & \left. + \|V_1\|_\infty\|\Pi_{K}\|_\infty^{2}\|\widehat{V}_{12}^{(n)}-V_{12}\|_\infty\right]\Vert P_1\Vert_{\infty , \infty} \|A\|_\infty,
\end{eqnarray*}
where $\Vert T\Vert_{\infty,\infty}:=\sup_{A\in\mathcal{L}(\mathbb{R}^{p+q})-\{0\}}\Vert T(A)\Vert_\infty/\Vert A \Vert_\infty$.  Hence
\begin{eqnarray}\label{ineg}
\|\widehat{\Psi}_{K}^{(n)} - \Psi_{K} \|_{\infty ,\infty} &\leq & \left[\|1+\|V_1\|_\infty\|\Pi_{K}\|_\infty\|\right]\|\widehat{V}_{12}^{(n)}\|_\infty \Vert \widehat{\Pi}_{K}^{(n)}-\Pi_{K}\|_\infty\Vert P_1\Vert_{\infty , \infty}\\
& &  +\left[1+ \|V_1\|_\infty\|\Pi_{K}\|_\infty\right]\|\Pi_{K}\|_\infty\|\widehat{V}_{12}^{(n)}-V_{12}\|_\infty\Vert P_1\Vert_{\infty , \infty}  .\nonumber
\end{eqnarray}
From the strong law of large numbers it is easily seen that $\widehat{V}_1^{(n)}$ (resp.   $\widehat{V}_{12}^{(n)}$ converges almost surely, as $n\rightarrow +\infty$ to $V_1$ (resp. $V_{12}$). Therefore, $\widehat{\Pi}_{K}^{(n)}$ converges almost surely, as $n\rightarrow +\infty$ to $ \Pi_{K} $, and from (\ref{ineg}) we deduce that $ \widehat{\Psi}_{K}^{(n)}$  converges almost surely, as $n\rightarrow +\infty$ to $\Psi_{K}$. It remains to obtain the asymptotic distribution of $\widehat{H}^{(n)}$. We  have $ \widehat{H}^{(n)}=\widehat{H}^{(n)}_{1} - \widehat{H}^{(n)}_{2}$ where
\[
 \widehat{H}^{(n)}_{1} = \sqrt{n} \left(\frac{1}{n} \sum_{k=1}^{n}Z_k \otimes Z_k -V \right) \,\,\,\textrm{ and }\,\,\,
 \widehat{H}^{(n)}_{2} = \frac{1}{\sqrt{n}} \left((\sqrt{n}\,\overline{Z}^{(n)}) \otimes (\sqrt{n}\,\overline{Z}^{(n)}) \right).
\]
The central limit theorem ensures that $\widehat{H}^{(n)}_{1}$ (resp. $\sqrt{n}\,\overline{Z}^{(n)}$) converges in distribution, as $n\rightarrow +\infty$, to a random variable $H$ (resp. $U$) having a centered normal distribution with  covariance operator $\Gamma$ (resp. $\Gamma^\prime$)  given by 
\[
\Gamma=\mathbb{E}\left((Z\otimes Z-V)\widetilde{\otimes}(Z\otimes Z-V)\right)\,\,\,\,\textrm{(resp. } \Gamma^\prime= \mathbb{E}\left(Z\otimes Z\right)\textrm{ )}.
\]
Hence, $ \widehat{H}^{(n)}_{2}$ converges in probability, as $n\rightarrow+\infty$, to $0$ and Slustky theorem permits to conclude that  $\widehat{H}^{(n)}$ converges in distribution, as $n\rightarrow +\infty$, to  $H$.

}

\subsectionn{Proof of Theorem 2}
\label{subsec:lem1}

{ \fontfamily{times}\selectfont
 \noindent
We just need to prove the lemma which is given below. Then the proof of Theorem 1 is similar than that of Theorem 3.1 in \cite{Nk12}.  Let $r$ $\in$ $\mathbb{N}^{*}$ and $(m_1,\cdots,m_r)$ $\in$ $(\mathbb{N}^{*})^{r}$ such that  $\sum_{\ell=1}^rm_\ell=p$ and 
\[
\xi_{K_{\sigma(1)}} = \cdots =\xi_{K_{\sigma(m_1)}} > \xi_{K_{\sigma(m_1 +1)}} = \cdots = \xi_{K_{\sigma(m_1+m_2)}} >  \cdots >  \xi_{K_{\sigma(m_1+m_2+\cdots+m_{r-1}+1)}}  = \cdots = \xi_{K_{\sigma(m_1+m_2+\cdots+m_r)}}.
\]
Then, putting  $E=\{\ell\in\mathbb{N}^\ast \,/\,1\leq \ell\leq r,\,\,m_\ell\geq 2\}$ and $F_\ell:=  \left\{\left(\sum_{k=0}^{\ell -1}m_k\right)+1,\cdots,\left(\sum_{k=0}^{\ell }m_k\right)-1  \right\}$ with $m_0=0$, we have:
\begin{lemma}
If $E\neq\emptyset$, then for all $\ell\in E$ and all $i\in F_\ell$, the sequence $n^\alpha\left(\widehat{\xi}^{(n)}_{K_{\sigma (i)}}-\widehat{\xi}^{(n)}_{K_{\sigma (i+1)}}\right)$ converges in probability to $0$ as $n\rightarrow +\infty$.
\end{lemma}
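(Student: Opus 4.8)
The plan is to use the exact representation of Proposition 1 to rewrite each estimated criterion, and then to show that, for two tied indices, the difference of the two estimators is $O_{P}(n^{-1/2})$; since $\alpha<1/2$, this forces $n^{\alpha}$ times the difference to vanish in probability.

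I would fix $\ell\in E$ and $i\in F_{\ell}$, so that $i$ and $i+1$ lie in the same block and $\xi_{K_{\sigma(i)}}=\xi_{K_{\sigma(i+1)}}=:\xi$. Writing $\delta_{i}:=\delta_{K_{\sigma(i)}}$, $\Psi_{i}:=\Psi_{K_{\sigma(i)}}$ and $W_{i}^{(n)}:=\widehat{\Psi}^{(n)}_{K_{\sigma(i)}}(\widehat{H}^{(n)})$ (and likewise for $i+1$), Proposition 1 gives $\sqrt{n}\,\widehat{\xi}^{(n)}_{K_{\sigma(i)}}=\|W_{i}^{(n)}+\sqrt{n}\,\delta_{i}\|$ together with $\|\delta_{i}\|=\xi$. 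The first substantial step is to check that $W_{i}^{(n)}=O_{P}(1)$: since $\widehat{\Psi}^{(n)}_{K_{\sigma(i)}}\to\Psi_{i}$ almost surely (a deterministic limit) and $\widehat{H}^{(n)}\xrightarrow{d}H$, both established in Proposition 1, the extended continuous mapping / Slutsky theorem applied to the evaluation map $(T,A)\mapsto T(A)$ yields $W_{i}^{(n)}\xrightarrow{d}\Psi_{i}(H)$, so $W_{i}^{(n)}$, and hence $\|W_{i}^{(n)}\|$, $\|W_{i}^{(n)}\|^{2}$ and $\langle W_{i}^{(n)},\delta_{i}\rangle$, are all bounded in probability.

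Next I would split on the common value $\xi$. When $\xi>0$, expanding the squared norm gives
\[
\big(\widehat{\xi}^{(n)}_{K_{\sigma(i)}}\big)^{2}=\xi^{2}+\frac{2}{\sqrt{n}}\,\langle W_{i}^{(n)},\delta_{i}\rangle+\frac{1}{n}\,\|W_{i}^{(n)}\|^{2},
\]
and the analogue for $i+1$; subtracting cancels the $\xi^{2}$ terms and leaves a quantity of order $O_{P}(n^{-1/2})$. Dividing by $\widehat{\xi}^{(n)}_{K_{\sigma(i)}}+\widehat{\xi}^{(n)}_{K_{\sigma(i+1)}}$, which tends in probability to $2\xi>0$ by the almost sure consistency of the empirical operators recorded in the proof of Proposition 1, I obtain $\widehat{\xi}^{(n)}_{K_{\sigma(i)}}-\widehat{\xi}^{(n)}_{K_{\sigma(i+1)}}=O_{P}(n^{-1/2})$. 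When $\xi=0$ the definiteness of the norm forces $\delta_{i}=\delta_{i+1}=0$, so Proposition 1 reads $\widehat{\xi}^{(n)}_{K_{\sigma(i)}}=n^{-1/2}\|W_{i}^{(n)}\|$ and the difference is immediately $n^{-1/2}\big(\|W_{i}^{(n)}\|-\|W_{i+1}^{(n)}\|\big)=O_{P}(n^{-1/2})$. In either case $n^{\alpha}\big(\widehat{\xi}^{(n)}_{K_{\sigma(i)}}-\widehat{\xi}^{(n)}_{K_{\sigma(i+1)}}\big)=O_{P}(n^{\alpha-1/2})\to 0$ in probability, since $\alpha-1/2<0$.

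I expect the degenerate block $\xi=0$ to be the delicate point: there the denominator $\widehat{\xi}^{(n)}_{K_{\sigma(i)}}+\widehat{\xi}^{(n)}_{K_{\sigma(i+1)}}$ also tends to $0$, so the division used for $\xi>0$ is unavailable, and one must instead exploit the exact representation of Proposition 1 — rather than a Taylor expansion of the norm around a nonzero operator — to control the difference directly. The remaining work is pure bookkeeping: verifying that each residual factor ($\langle W_{i}^{(n)},\delta_{i}\rangle$, $\|W_{i}^{(n)}\|^{2}$, and the differences thereof) is genuinely $O_{P}(1)$, which follows from the joint convergence already used for $W_{i}^{(n)}$.
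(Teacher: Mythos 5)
Your proof is correct and follows essentially the same route as the paper: the same case split on whether the common value $\gamma_\ell=\xi_{K_{\sigma(i)}}=\xi_{K_{\sigma(i+1)}}$ vanishes, the same use of the exact representation $\sqrt{n}\,\widehat{\xi}^{(n)}_{K}=\|\widehat{\Psi}^{(n)}_{K}(\widehat{H}^{(n)})+\sqrt{n}\,\delta_{K}\|$ from Proposition 1, and in the nonzero case the same difference-of-squares argument (the paper writes the quotient with denominator $\|n^{-1/2}\widehat{\Psi}^{(n)}_{K_{\sigma(i)}}(\widehat{H}^{(n)})+\delta_{K_{\sigma(i)}}\|+\|n^{-1/2}\widehat{\Psi}^{(n)}_{K_{\sigma(i+1)}}(\widehat{H}^{(n)})+\delta_{K_{\sigma(i+1)}}\|$, which is exactly your $\widehat{\xi}^{(n)}_{K_{\sigma(i)}}+\widehat{\xi}^{(n)}_{K_{\sigma(i+1)}}$), with stochastic boundedness obtained from the almost sure convergence of $\widehat{\Psi}^{(n)}_{K}$ together with the convergence in distribution of $\widehat{H}^{(n)}$. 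No gaps.
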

\begin{proof}
Let us put $\gamma_\ell=\xi_{K_{\sigma (i)}}=\xi_{K_{\sigma (i+1)}}$; if $\gamma_\ell=0$, then
\begin{eqnarray*}
\left\vert n^{\alpha}\left( \widehat{\xi}^{(n)}_{K_{\sigma(i)}} -\widehat{\xi}^{(n)}_{K_{\sigma(i+1)}}\right) \right\vert&=& n^{\alpha-\frac{1}{2}}\left\vert\|\widehat{\Psi}^{(n)}_{K_{\sigma(i)}}(\widehat{H}^{(n)}) \|- \| \widehat{\Psi}^{(n)}_{K_{\sigma(i+1)}}(\widehat{H}^{(n)}) \| \right\vert\\
 &\leq& n^{\alpha-\frac{1}{2}} \|\left(\widehat{\Psi}^{(n)}_{K_{\sigma(i)}} - \widehat{\Psi}^{(n)}_{K_{\sigma(i+1)}} \right)\left(\widehat{H}^{(n)}\right) \|  \\
 &\leq& n^{\alpha-\frac{1}{2}} \|\widehat{\Psi}^{(n)}_{K_{\sigma(i)}} - \widehat{\Psi}^{(n)}_{K_{\sigma(i+1)}} \|_\infty\| \widehat{H}^{(n)} \|,
\end{eqnarray*}
Since $\widehat{\Psi}^{(n)}_{K_{\sigma(i)}}$ and $\widehat{\Psi}^{(n)}_{K_{\sigma(i+1)}}$ converge almost surely, as $n\rightarrow +\infty$, to $\Psi_{K_{\sigma(i)}} $ and  $\Psi_{K_{\sigma(i+1)}}$ respectively, and since $ \widehat{H}^{(n)}$ converges in distribution, as $n\rightarrow +\infty$, to $H$, it follows from the preceding inequality and from $\alpha<1/2$ that  $n^\alpha\left(\widehat{\xi}^{(n)}_{K_{\sigma (i)}}-\widehat{\xi}^{(n)}_{K_{\sigma (i+1)}}\right)$ converges in probability to $0$ as $n\rightarrow +\infty$.
If $\gamma_\ell\neq 0$, we have
\begin{eqnarray*}
 n^{\alpha}\left( \widehat{\xi}^{(n)}_{K_{\sigma(i)}} -\widehat{\xi}^{(n)}_{K_{\sigma(i+1)}}\right)& = & n^{\alpha-\frac{1}{2}}\left(\|\widehat{\Psi}^{(n)}_{K_{\sigma(i)}}(\widehat{H}^{(n)}) + \sqrt{n}\delta_{K_{\sigma(i)}} \| - \| \widehat{\Psi}^{(n)}_{K_{\sigma(i+1)}}(\widehat{H}^{(n)}) + \sqrt{n}\delta_{K_{\sigma(i+1)}} \| \right) \\
&=& \frac{n^{\alpha-\frac{1}{2}} \left( \|\widehat{\Psi}^{(n)}_{K_{\sigma(i)}}(\widehat{H}^{(n)}) \|^{2} - \| \widehat{\Psi}^{(n)}_{K_{\sigma(i+1)}}(\widehat{H}^{(n)}) \|^{2}\right)}{\|\widehat{\Psi}^{(n)}_{K_{\sigma(i)}}(\widehat{H}^{(n)}) + \sqrt{n}\delta_{K_{\sigma(i)}}  \| + \| \widehat{\Psi}^{(n)}_{K_{\sigma(i+1)}}(\widehat{H}^{(n)}) + \sqrt{n}\delta_{K_{\sigma(i+1)}} \|} \\
 &+& \frac{2n^{\alpha} \left( \left\langle \delta_{K_{\sigma(i)}},\widehat{\Psi}^{(n)}_{K_{\sigma(i)}}(\widehat{H}^{(n)}) \right\rangle - \left\langle \delta_{K_{\sigma(i+1)}},\widehat{\Psi}^{(n)}_{K_{\sigma(i+1)}}(\widehat{H}^{(n)}) \right\rangle \right)}{\|\widehat{\Psi}^{(n)}_{K_{\sigma(i)}}(\widehat{H}^{(n)}) + \sqrt{n}\delta_{K_{\sigma(i)}} \| + \| \widehat{\Psi}^{(n)}_{K_{\sigma(i+1)}}(\widehat{H}^{(n)}) + \sqrt{n}\delta_{K_{\sigma(i+1)}} \|}  \\
&=& \frac{n^{\alpha-1} \left( \|\widehat{\Psi}^{(n)}_{K_{\sigma(i)}}(\widehat{H}^{(n)}) \|^{2} - \| \widehat{\Psi}^{(n)}_{K_{\sigma(i+1)}}(\widehat{H}^{(n)}) \|^{2}\right)}{\| n^{-\frac{1}{2}}\widehat{\Psi}^{(n)}_{K_{\sigma(i)}}(\widehat{H}^{(n)}) + \delta_{K_{\sigma(i)}} \| + \| n^{-\frac{1}{2}}\widehat{\Psi}^{(n)}_{K_{\sigma(i+1)}}(\widehat{H}^{(n)}) + \delta_{K_{\sigma(i+1)}} \|} \\
   &+& \frac{2n^{\alpha-\frac{1}{2}} \left( \left\langle \delta_{K_{\sigma(i)}},\widehat{\Psi}^{(n)}_{K_{\sigma(i)}}(\widehat{H}^{(n)}) \right\rangle - \left\langle \delta_{K_{\sigma(i+1)}},\widehat{\Psi}^{(n)}_{K_{\sigma(i+1)}}(\widehat{H}^{(n)}) \right\rangle \right)}{\|n^{-\frac{1}{2}}\widehat{\Psi}^{(n)}_{K_{\sigma(i)}}(\widehat{H}^{(n)}) + \delta_{K_{\sigma(j)}} \| + \|n^{-\frac{1}{2}} \widehat{\Psi}^{(n)}_{K_{\sigma(i+1)}}(\widehat{H}^{(n)}) + \delta_{K_{\sigma(i+1)}} \|},
\end{eqnarray*}
where $<\cdot , \cdot >$ is the inner prodcut defined by $<A,B>=tr(A^\ast B)$. First,
\begin{eqnarray}\label{avdernier}
& & \left| n^{\alpha-1} \left( \|\widehat{\Psi}^{(n)}_{\sigma(j)}(\widehat{H}^{(n)}) \|^{2} - \| \widehat{\Psi}^{(n)}_{\sigma(j+1)}(\widehat{H}^{(n)}) \|^{2}\right)\right|\nonumber\\
 &\leq& n^{\alpha-1}  \left( \|\widehat{\Psi}^{(n)}_{K_{\sigma(i)}}(\widehat{H}^{(n)}) \|^{2} + \|\widehat{\Psi}^{(n)}_{K_{\sigma(i+1)}}(\widehat{H}^{(n)}) \|^{2}\right)  \nonumber\\
 &\leq& n^{\alpha-1} \left( \|\widehat{\Psi}^{(n)}_{K_{\sigma(i)}} \|_\infty^{2} + \| \widehat{\Psi}^{(n)}_{K_{\sigma(i+1)}} \|_\infty^{2}\right)\|\widehat{H}^{(n)}\|^2 
\end{eqnarray}
and, further,
\begin{eqnarray}\label{dernier}
& &\left|2n^{\alpha-\frac{1}{2}}\left( \left\langle \delta_{K_{\sigma(i)}},\widehat{\Psi}^{(n)}_{K_{\sigma(i)}}(\widehat{H}^{(n)}) \right\rangle - \left\langle \delta_{K_{\sigma(i+1)}},\widehat{\Psi}^{(n)}_{K_{\sigma(i+1)}}(\widehat{H}^{(n)}) \right\rangle \right)\right|\nonumber \\
 &\leq&  2n^{\alpha-\frac{1}{2}}\left( \left|\left\langle \delta_{K_{\sigma(i)}},\widehat{\Psi}^{(n)}_{K_{\sigma(i)}}(\widehat{H}^{(n)}) \right\rangle\right| + \left|\left\langle \delta_{K_{\sigma(i+1)}},\widehat{\Psi}^{(n)}_{K_{\sigma(i+1)}}(\widehat{H}^{(n)}) \right\rangle\right| \right) \nonumber\\
 &\leq& 2n^{\alpha-\frac{1}{2}} \left( \| \delta_{K_{\sigma(i)}}\|\|\widehat{\Psi}^{(n)}_{K_{\sigma (i))}}(\widehat{H}^{(n)})\| + \| \delta_{K_{\sigma(i+1)}}\|\|\widehat{\Psi}^{(n)}_{K_{\sigma(i+1)}}(\widehat{H}^{(n)})\| \right) \nonumber\\
 &\leq& 2n^{\alpha-\frac{1}{2}}\gamma_{\ell} \left(\|\widehat{\Psi}^{(n)}\Vert_\infty +\|\widehat{\Psi}^{(n)}_{K_{\sigma(i+1)}}\|_\infty \right)\|\widehat{H}^{(n)}\|.
\end{eqnarray}
Equations (\ref{avdernier}) and (\ref{dernier}), and the above recalled  convergence properties permit to conclude that the sequence  $n^\alpha\left(\widehat{\xi}^{(n)}_{K_{\sigma (i)}}-\widehat{\xi}^{(n)}_{K_{\sigma (i+1)}}\right)$ converges in probability to $0$,  as $n\rightarrow +\infty$.
\end{proof}

}

 {\color{myaqua}

}}

\end{document}